\newtheorem{theorem}{Theorem}[section]
\newtheorem{lemma}[theorem]{Lemma}
\theoremstyle{definition}
\newtheorem{definition}[theorem]{Definition}
\newtheorem{remark}[theorem]{Remark}
\numberwithin{equation}{section}
\DeclareMathOperator{\Fix}{Fix}
\DeclarePairedDelimiter\norm{\Vert}{\Vert}
\mathchardef\mhyp="2D
\begin{document}
\title[Fixed point theorems]{Common fixed point theorems for nonexpansive
mappings using the lower semicontinuity property}
\author[S. Borzdy\'{n}ski]{S\l awomir Borzdy\'{n}ski}
\address{S\l awomir Borzdy\'{n}ski, Institute of Mathematics, Maria Curie-Sk%
\l odowska University, 20-031 Lublin, Poland}
\email{slawomir.borzdynski@gmail.com}
\date{}

\begin{abstract}
Suppose that $E$ is a Banach space, $\tau $ a topology under which the norm
of $E$ becomes $\tau $-lower semicontinuous and $\mathcal{S}$ a commuting
family of $\tau $-continuous nonexpansive mappings defined on a $\tau $%
-compact convex subset $C$ of $E.$ It is shown that the set of common fixed
points of $\mathcal{S}$ is a nonempty nonexpansive retract of $C$. Along the
way, a few other related fixed point theorems are derived.
\end{abstract}

\subjclass[2010]{Primary 47H10; Secondary 46B20, 47H09}
\keywords{Nonexpansive mapping, Fixed point}
\maketitle

%%%%% To ease editing, for IMPAN journals add:

\baselineskip=17pt

%%%%%%%%%%%

%% In the running head, replace first names by initials
%% and give an abbreviation of the title.

\section{Introduction}

Let $C$ be a subset of a Banach space $E$. A mapping $T:C\rightarrow C$ is
said to be nonexpansive if $\left\Vert Tx-Ty\right\Vert \leq \left\Vert
x-y\right\Vert $ for every $x,y\in C.$ In this paper we shall study fixed
point properties of commutative semigroups of nonexpansive mappings defined
on $\tau $-compact convex subsets of a Banach space $E$ with respect to a
Hausdorff topology $\tau $ on $E.$ For more information on fixed point
theory for nonexpansive mappings, confer e.g., \cite{GoKi} or \cite{GrDu}.

The following fixed point theorem for a commuting family of
nonexpansive mappings was proved in \cite{BoWi}.

\begin{theorem}
\label{firstPub} Suppose that $C$ is a nonempty weak$^{\ast }$-compact
convex subset of a dual Banach space and $\mathcal{S}$ is an arbitrary
family of commuting weak$^{\ast }$-continuous nonexpansive self-mappings on $%
C$. Then $\Fix\mathcal{S}$, the set of the common fixed points of the family
$\mathcal{S}$, is a nonexpansive retract of $C$.
\end{theorem}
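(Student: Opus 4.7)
The plan is to establish the theorem in two stages: first that $\Fix\mathcal{S}$ is nonempty, and second that it admits a nonexpansive retraction from $C$. For the first stage I apply Zorn's lemma to the family $\mathcal{K}$ of nonempty weak$^{\ast}$-compact convex $\mathcal{S}$-invariant subsets of $C$, ordered by reverse inclusion. This family contains $C$, and decreasing chains have nonempty intersection by weak$^{\ast}$-compactness, so $\mathcal{K}$ admits a minimal element $K$. Should $K$ turn out to be a single point, that point is fixed by every $S\in\mathcal{S}$, since $K$ is $\mathcal{S}$-invariant.

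The substantive task is reducing $K$ to a point. Fix $x_{0}\in K$ and build an $\mathcal{S}$-invariant convex functional $\phi$ on $K$ measuring the asymptotic distance to the orbit $\mathcal{S}x_{0}$; for instance, $\phi(x)=\mu_{s}\|x-sx_{0}\|$ where $\mu$ is a left-invariant mean on the commutative (hence amenable) semigroup generated by $\mathcal{S}$, or an asymptotic radius built from a suitable cofinal net. Weak$^{\ast}$-lower semicontinuity of the norm on any dual space makes the sublevel sets of $\phi$ weak$^{\ast}$-closed convex; invariance of $\mu$ and nonexpansiveness of each $S$ make them $\mathcal{S}$-invariant. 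Hence the set of minimizers of $\phi$ on $K$ lies in $\mathcal{K}$, and by minimality equals $K$, forcing $\phi$ to be constant on $K$. Combining this constancy with weak$^{\ast}$-lsc and the $\mathcal{S}$-invariance structure, one aims to conclude $\diam K=0$.

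For the second stage I would invoke a Bruck-type construction. Let $\Pi$ be the set of nonexpansive self-maps $R\colon C\to C$ that fix $\Fix\mathcal{S}$ pointwise and commute with every $S\in\mathcal{S}$; then $\mathrm{id}_{C}\in\Pi$. In the pointwise weak$^{\ast}$-topology $\Pi$ sits inside the Tychonoff-compact product $C^{C}$, and nonexpansiveness persists under pointwise weak$^{\ast}$-limits since the norm is weak$^{\ast}$-lsc. Order $\Pi$ by $R_{1}\preceq R_{2}\iff\Fix R_{1}\supseteq\Fix R_{2}$ and invoke Zorn to obtain a maximal $R_{0}$. If some $x\in C$ had $R_{0}x\notin\Fix\mathcal{S}$, then $SR_{0}x\neq R_{0}x$ for some $S\in\mathcal{S}$, and $S\circ R_{0}\in\Pi$ would strictly refine $R_{0}$, contradicting maximality. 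Thus $R_{0}(C)\subseteq\Fix\mathcal{S}$, making $R_{0}$ the required nonexpansive retraction.

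The principal obstacle is the singleton reduction. In the classical weakly compact setting this step follows from normal structure of $C$, but weak$^{\ast}$-compact convex sets in a general dual Banach space need not possess weak$^{\ast}$ normal structure. The geometric input of normal structure must therefore be replaced by algebraic input from the commuting family $\mathcal{S}$, via the invariant mean or asymptotic functional above; orchestrating weak$^{\ast}$-lsc, convexity, and $\mathcal{S}$-invariance of the sublevel sets of $\phi$ to force constancy of $\phi$ and then $\diam K=0$ is the delicate heart of the argument.
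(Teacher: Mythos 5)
Your proposal has genuine gaps in both stages, and the second stage is also built on a route quite different from the one the paper (following [BW]) actually uses. In Stage 1 you correctly observe that weak$^{\ast}$-compact convex sets need not have normal structure, so the Kirk minimal-invariant-set argument cannot be closed in the usual way; but your replacement --- an invariant-mean functional $\phi$ whose minimizer set must equal the minimal set $K$ --- ends with ``one aims to conclude $\diam K=0$,'' which is precisely the step that is missing and for which no mechanism is offered: constancy of $\phi$ on $K$ does not bound the diameter of $K$. The ingredient you are not exploiting is the one that makes the hypothesis ``$\tau$-lower semicontinuous norm'' do real work: for a single weak$^{\ast}$-continuous nonexpansive $T$ one takes the unique fixed points $F_{s}x$ of the contractions $z\mapsto \tfrac{1}{s}x+(1-\tfrac{1}{s})Tz$, gets $\norm{TF_{s}x-F_{s}x}\leq \diam C/s\to 0$, extracts a weak$^{\ast}$-convergent subnet, and uses weak$^{\ast}$-continuity of $T$ together with weak$^{\ast}$-lower semicontinuity of the norm to conclude the limit is a fixed point. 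No minimal invariant set, normal structure, or invariant mean is needed.

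In Stage 2, the Bruck-style Zorn argument as written does not go through. With the order $R_{1}\preceq R_{2}\iff \Fix R_{1}\supseteq \Fix R_{2}$ you need chains to have upper bounds, but a pointwise weak$^{\ast}$-limit of a chain can have a strictly larger fixed point set than the intersection of the fixed point sets along the chain, so maximal elements are not guaranteed. Worse, the maximality contradiction fails: if $R_{0}x\notin\Fix\mathcal{S}$ and $SR_{0}x\neq R_{0}x$, it does not follow that $\Fix(S\circ R_{0})\subsetneq\Fix R_{0}$, since $SR_{0}y=y$ does not imply $R_{0}y=y$ ($R_{0}$ is not assumed idempotent), so $S\circ R_{0}$ need not ``strictly refine'' $R_{0}$ in your order. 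The argument the paper actually runs (for the generalization, and in [BW] for this theorem) is inductive and constructive: given a nonexpansive retraction $R_{n}$ onto $\Fix\{T_{1},\dots,T_{n}\}$, form the maps $F_{s}$ from the contractions built on $T_{n+1}R_{n}$, show via the commutation lemma (Lemma \ref{commute_retract}) and Lemma \ref{approx_fix} that $(F_{s}x)$ is an approximate fixed point sequence for the whole family, and pass to a weak$^{\ast}$-convergent subnet in the Tychonoff-compact $C^{C}$; nonexpansiveness of the limit is exactly where weak$^{\ast}$-lower semicontinuity enters. The passage from finite to arbitrary commuting families is then a second subnet argument over the directed set of finite subfamilies (Theorem \ref{infThm}), again with no appeal to Zorn's lemma beyond Tychonoff.
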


Recall that a non-void set $D\subset C$ is a nonexpansive retract of $C$ if
there exists a nonexpansive mapping $R:C\rightarrow D$ such that the
restriction of $R$ to $D$ is the identity. Then $R$ is called a nonexpansive
retraction. The above theorem enabled us to confirm a special case of the
long-standing open problem originally posed by A. T.-M. Lau. (check \cite%
{La2}, \cite{La} or \cite[Question 1]{LaZh} for details).

It is known that the norm is lower semicontinuous in the weak$^{\ast }$
topology. Given a Hausdorff topology $\tau $, the $\tau $-lower
semicontinuity of the norm means that
\begin{equation*}
\norm{\tau\mhyp\lim_{\alpha}x_{\alpha}}\leq \liminf_{\alpha }%
\norm{x_{\alpha}}
\end{equation*}%
for any $\tau $-convergent net $(x_{\alpha })_{\alpha }\subset E.$ The other
known examples of such topologies are weak and strong (norm) topologies. It
turns out that Theorem \ref{firstPub} can be generalized to topologies under
which the norm becomes lower semicontinuous. This paper provides such a
generalization and--in the process--other related ones. But first, we also
show some theorems that follow from Theorem \ref{firstPub}.

\section{Results}

We start with the statement that bears some resemblance to a remarkable
theorem of Bader, Gelander, and Monod \cite{BGM}.

\begin{theorem}
\label{weakTschebyshev}Let $\mathcal{S}$ be a commuting family of weak$%
^{\ast }$-continuous nonexpansive mappings defined on a dual Banach space $E$%
. Let furthermore $\mathcal{S}$ preserve a bounded set $A$ (i.e., $TA=A$ for
all $T\in \mathcal{S}$). Then there is a common fixed point of $\mathcal{S}$
which is located in $\mathcal{C}(A)$, the Tchebyshev center of $A$.
\end{theorem}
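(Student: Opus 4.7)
The plan is to reduce the problem to Theorem \ref{firstPub} applied to $\mathcal{C}(A)$ itself. Concretely, I would first verify that the Tchebyshev center $\mathcal{C}(A)$ is a nonempty, convex, weak$^\ast$-compact subset of $E$; then show that the assumption $TA=A$ forces each $T\in\mathcal{S}$ to map $\mathcal{C}(A)$ into $\mathcal{C}(A)$; and finally invoke Theorem \ref{firstPub} on the restricted semigroup.

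For the first step, set $r_A(x)=\sup_{a\in A}\norm{x-a}$ and $r(A)=\inf_{x\in E}r_A(x)$, so $\mathcal{C}(A)=\{x\in E:r_A(x)=r(A)\}$. For each fixed $a\in A$, the map $x\mapsto\norm{x-a}$ is weak$^\ast$-lower semicontinuous because the norm on the dual space $E$ is weak$^\ast$-lower semicontinuous; taking the supremum preserves this, so $r_A$ is weak$^\ast$-l.s.c. It is also convex and satisfies $r_A(x)\geq\norm{x}-\sup_{a\in A}\norm{a}$, so its sublevel sets are bounded, hence weak$^\ast$-compact by Banach--Alaoglu. Consequently $r_A$ attains its infimum, $\mathcal{C}(A)$ is nonempty, and as an intersection of weak$^\ast$-closed bounded convex sets it is weak$^\ast$-compact and convex.

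For the second step, fix $T\in\mathcal{S}$ and $x\in\mathcal{C}(A)$. Using $TA=A$, every $a\in A$ can be written as $a=Ta'$ for some $a'\in A$, so by nonexpansiveness
\begin{equation*}
r_A(Tx)=\sup_{a\in A}\norm{Tx-a}=\sup_{a'\in A}\norm{Tx-Ta'}\leq\sup_{a'\in A}\norm{x-a'}=r_A(x)=r(A),
\end{equation*}
which forces $Tx\in\mathcal{C}(A)$. Thus each $T\in\mathcal{S}$ restricts to a weak$^\ast$-continuous nonexpansive self-map of $\mathcal{C}(A)$, and this restricted family remains commuting. Theorem \ref{firstPub} then guarantees that $\Fix\mathcal{S}\cap\mathcal{C}(A)$ is nonempty, which is exactly the conclusion.

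I do not anticipate a genuinely hard step here; the main point requiring care is the first one, namely checking that $\mathcal{C}(A)$ is nonempty and weak$^\ast$-compact. The crucial use of the hypothesis is the equality $TA=A$ (rather than mere invariance $TA\subseteq A$), which is what makes the estimate $r_A(Tx)\leq r_A(x)$ available and lets us pass to the restricted semigroup.
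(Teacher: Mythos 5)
Your proposal is correct and follows essentially the same route as the paper: establish that $\mathcal{C}(A)$ is nonempty, convex, and weak$^{\ast}$-compact, show it is $\mathcal{S}$-invariant via $TA=A$ together with nonexpansiveness, and then apply Theorem \ref{firstPub}. The only difference is cosmetic: you prove the nonemptiness and weak$^{\ast}$-compactness of $\mathcal{C}(A)$ directly via lower semicontinuity of $r_A$ and Banach--Alaoglu, where the paper cites this as known, and you phrase the invariance estimate through $r_A(Tx)\leq r_A(x)$ rather than the equivalent ball inclusion $T\mathbb{B}(c,r)\subset\mathbb{B}(Tc,r)$.
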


Before the proof, let us recall the definition of the Tchebyshev center:

\begin{equation*}
\mathcal{C}(A)=\{c\in E:A\subset \mathbb{B}(c,r(A))\},
\end{equation*}
where $\mathbb{B}$ denotes the closed ball, and $r(A)$ is called the
Tchebyshev radius of the set $A$:
\begin{equation*}
r(A)=\inf\{r\geq0:\exists_{x\in E} A\subset \mathbb{B}(x,r)\}.
\end{equation*}

\begin{proof}
In the dual space $E$, it is known that $\mathcal{C}(A)$ is  nonempty,
bounded, convex and $w^{\ast }$-compact (cf. e.g., \cite{BGM}). So the only
thing to do is to show the $\mathcal{S}$-invariance of it, and use Theorem %
\ref{firstPub}. Take $T\in \mathcal{S}$. If $r$ denotes the Tchebyshev
radius of $A$ and $c\in \mathcal{C}(A)$, then from the fact that $A\subset
\mathbb{B}(c,r)$, we can derive
\begin{equation*}
A=TA\subset T\mathbb{B}(c,r)\subset \mathbb{B}(Tc,r).
\end{equation*}%
The latter inclusion is true because if $z\in T\mathbb{B}(c,r)$, then
obviously there exists $x\in \mathbb{B}(c,r)$ such that $z=Tx$, meaning
\begin{equation*}
\norm{Tc-z}=\norm{Tc-Tx}\leq \norm{c-x}\leq r,
\end{equation*}%
that is, $z\in \mathbb{B}(Tc,r)$. Thus we have $A\subset \mathbb{B}(Tc,r)$,
meaning $Tc\in \mathcal{C}(A)$. From the freedom of choice of $c$ and $T$ we
conclude that $\mathcal{C}(A)$ is $\mathcal{S}$-invariant, which--as stated
before--proves the theorem.
\end{proof}

Using the Kuratowski-Zorn lemma, we can derive the following lemma.

\begin{lemma}
\label{subsurjectivity} For every commuting family $\mathcal{S}$ of
continuous mappings defined on a compact set $C$, there exists a compact
subset of $C$ on which every mapping in $\mathcal{S}$ is surjective.
\end{lemma}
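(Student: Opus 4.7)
The plan is to apply the Kuratowski--Zorn lemma to the collection
\[
\mathcal{F}=\{K\subseteq C:K\text{ is nonempty, compact, and }T(K)\subseteq K\text{ for every }T\in\mathcal{S}\},
\]
ordered by reverse inclusion, and then extract surjectivity from the minimality of the resulting element.

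First I would observe that $\mathcal{F}\neq\varnothing$ since $C\in\mathcal{F}$. To verify the hypothesis of Zorn's lemma, take a chain $(K_\alpha)_\alpha$ in $\mathcal{F}$ and set $K_\infty=\bigcap_\alpha K_\alpha$. Being a decreasing family of nonempty compact subsets of the compact space $C$, the sets $K_\alpha$ have the finite intersection property, so $K_\infty$ is nonempty; it is of course compact, and for any $T\in\mathcal{S}$ and $x\in K_\infty$ one has $Tx\in T(K_\alpha)\subseteq K_\alpha$ for every index $\alpha$, hence $Tx\in K_\infty$. Thus $K_\infty\in\mathcal{F}$ is an upper bound of the chain (in the reverse-inclusion order), and Zorn produces a minimal element $K_0\in\mathcal{F}$.

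Next I would show that every $T\in\mathcal{S}$ maps $K_0$ onto itself. Fix $T\in\mathcal{S}$ and consider $T(K_0)$. It is nonempty and compact (continuous image of a compact set), and for every $S\in\mathcal{S}$ the commutativity of $\mathcal{S}$ gives
\[
S\bigl(T(K_0)\bigr)=T\bigl(S(K_0)\bigr)\subseteq T(K_0),
\]
because $S(K_0)\subseteq K_0$. Hence $T(K_0)\in\mathcal{F}$, and since $T(K_0)\subseteq K_0$, the minimality of $K_0$ forces $T(K_0)=K_0$. As $T$ was arbitrary, every member of $\mathcal{S}$ is surjective on $K_0$.

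The only delicate point is the chain step; once one notes that compactness of $C$ ensures $\bigcap_\alpha K_\alpha$ is nonempty, the rest is a clean use of commutativity to re-enter the class $\mathcal{F}$ and conclude by minimality.
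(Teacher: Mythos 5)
Your proof is correct and follows exactly the route the paper intends: the paper gives no written proof of this lemma, saying only that it follows from the Kuratowski--Zorn lemma, and your argument (a minimal nonempty compact $\mathcal{S}$-invariant set, with commutativity showing $T(K_0)$ is again invariant, hence equal to $K_0$ by minimality) is the standard realization of that hint. The chain step and the use of Hausdorffness to guarantee nonempty intersections are handled properly, so there is nothing to add.
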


The property from the above lemma was called the subsurjectivity in \cite%
{BoWi2}. Now we can state

\begin{theorem}
\label{smwhrComm} Let $\mathcal{S}$ be a commuting family of weak$^{\ast }$%
-continuous nonexpansive mappings defined on a dual Banach space $E$.
Suppose there exists a $w^{\ast }$-compact, $\mathcal{S}$-invariant set $C\subset E$. Then
the family $\mathcal{S}$ has a common fixed point.
\end{theorem}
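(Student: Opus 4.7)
The plan is to reduce Theorem \ref{smwhrComm} to the already-proved Theorem \ref{weakTschebyshev}. The gap between the two is that in \ref{smwhrComm} we only know $TC\subset C$, whereas \ref{weakTschebyshev} requires the stronger equality $TA=A$ for every member $T$ of the family. Closing that gap is the whole point of the argument.

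To do so, I would first apply Lemma \ref{subsurjectivity} to the family $\{T|_{C}:T\in\mathcal{S}\}$ of $w^{\ast}$-continuous self-mappings of the $w^{\ast}$-compact set $C$. This produces a $w^{\ast}$-compact subset $A\subset C$ on which every $T\in\mathcal{S}$ is surjective, i.e.\ $TA=A$. Because $A$ is $w^{\ast}$-compact in the dual space $E$, the uniform boundedness principle (applied through the predual) shows that $A$ is also norm-bounded.

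Having produced a bounded, $\mathcal{S}$-preserved set, I would then invoke Theorem \ref{weakTschebyshev} directly: it furnishes a common fixed point of $\mathcal{S}$, in fact one lying inside the Tchebyshev centre $\mathcal{C}(A)$, which is exactly what the statement asks for.

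The only step carrying real content is Lemma \ref{subsurjectivity}, and that will be the main obstacle if one insists on writing it out: it is a Kuratowski--Zorn argument on the poset of nonempty $w^{\ast}$-compact $\mathcal{S}$-invariant subsets of $C$ ordered by reverse inclusion, whose chains admit upper bounds via finite-intersection compactness, and whose minimal element $A$ must satisfy $TA=A$ for every $T\in\mathcal{S}$ (otherwise $TA\subsetneq A$ for some $T$, and commutativity makes $TA$ a strictly smaller $\mathcal{S}$-invariant $w^{\ast}$-compact subset, contradicting minimality). Once the lemma is granted, the proof of Theorem \ref{smwhrComm} is a one-line assembly of the earlier results.
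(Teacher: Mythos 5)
Your argument is correct and is essentially identical to the paper's own proof: apply Lemma \ref{subsurjectivity} to obtain a $w^{\ast}$-compact subset of $C$ on which every member of $\mathcal{S}$ is surjective, note that this set is bounded by $w^{\ast}$-compactness, and invoke Theorem \ref{weakTschebyshev}. Your Zorn's-lemma sketch of Lemma \ref{subsurjectivity} also matches what the paper indicates.
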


\begin{proof}
Use lemma \ref{subsurjectivity} to obtain a set $B\subset C$ with the same
properties as $C$ but on which $\mathcal{S}$ is surjective. Since $B$ is
bounded (because of $w^{\ast }$-compactness), using theorem \ref%
{weakTschebyshev} yields a common fixed point of $\mathcal{S}$ in $E$.
\end{proof}

Note, since the Tchebyshev center property is not monotone ($A\subset B%
\centernot\implies \mathcal{C}(A)\subset \mathcal{C}(B)$), we cannot state
this time that a fixed point of $\mathcal{S}$ is located in $\mathcal{C}(C)$.
Let us show one more example of an argument that utilizes the concept of
subsurjectivity. To this end, we will make use of the following definition.

\begin{definition}
For a family $\mathcal{S}$ of mappings defined on a set $C$, let $\gamma
\mathcal{S}$ denote the maximal subset of $C$, on which all elements from
the semigroup generated by $\mathcal{S}$ commute. If $\gamma \mathcal{S}%
\not=\emptyset $, we will say that $\mathcal{S}$ is somewhere commuting (on $%
C$).
\end{definition}

Let us list basic properties of the $\gamma $ operation.

\begin{lemma}
\label{lemmaInv}The following claims are true:

\begin{enumerate}
\item the set $\gamma \mathcal{S}$ is $\mathcal{S}$-invariant (i.e. $T\in
\mathcal{S}\implies T(\gamma \mathcal{S})\subset \gamma \mathcal{S}$),\label%
{claim1}

\item $Fix\mathcal{S}\subset \gamma \mathcal{S}$,\label{claim2}

\item if the mappings from $\mathcal{S}$ are continuous in some topology,
then $\gamma \mathcal{S}$ is closed in it\label{claim3}.
\end{enumerate}
\end{lemma}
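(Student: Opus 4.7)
The plan is to identify $\gamma\mathcal{S}$ with a concrete intersection and then verify the three claims in order. Writing $\langle\mathcal{S}\rangle$ for the semigroup of all finite compositions of elements of $\mathcal{S}$, I would first justify that
\begin{equation*}
\gamma\mathcal{S}=\bigcap_{S,T\in\langle\mathcal{S}\rangle}\bigl\{x\in C:ST(x)=TS(x)\bigr\};
\end{equation*}
any set on which the elements of $\langle\mathcal{S}\rangle$ commute must lie inside this intersection, and the intersection itself has the commutation property, so it is indeed the maximal such set. From this presentation all three claims follow quickly.

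For claim (\ref{claim1}), fix $T\in\mathcal{S}$ and $x\in\gamma\mathcal{S}$, and take arbitrary $U,V\in\langle\mathcal{S}\rangle$; the goal is $UV(Tx)=VU(Tx)$. The idea is to treat $UV$ (and $VU$) as single elements of $\langle\mathcal{S}\rangle$ and use the commutation at $x$ with the pair $(UV,T)$, which gives $UV(Tx)=T(UV(x))$ and $VU(Tx)=T(VU(x))$; a second application at $x$ with the pair $(U,V)$ yields $UV(x)=VU(x)$, and the two right-hand sides coincide. This is the step I expect to be the main obstacle: there is no cleverness involved, only careful bookkeeping, but one must resist the temptation to commute $T$ with $U$ or $V$ separately, since commutation of semigroup elements is only available at the distinguished point $x$, not as an equality of mappings.

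Claim (\ref{claim2}) is immediate from the intersection formula: if $x\in\Fix\mathcal{S}$, then a one-line induction on the number of factors shows $Rx=x$ for every $R\in\langle\mathcal{S}\rangle$, so $ST(x)=x=TS(x)$ for all $S,T\in\langle\mathcal{S}\rangle$. Finally, for claim (\ref{claim3}) assume the mappings in $\mathcal{S}$ are $\tau$-continuous; then so is every element of $\langle\mathcal{S}\rangle$, and for each pair $S,T$ the set $\{x\in C:ST(x)=TS(x)\}$ is the preimage of the diagonal of $E\times E$ under the $\tau$-continuous map $x\mapsto(ST(x),TS(x))$, hence $\tau$-closed (the Hausdorff assumption on $\tau$, tacit throughout the paper, is used here to ensure the diagonal is closed in the product topology). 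Intersecting these closed sets over all pairs $S,T$ gives that $\gamma\mathcal{S}$ is $\tau$-closed.
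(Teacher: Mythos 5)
Your proposal is correct and follows essentially the same route as the paper: both make the maximality explicit by characterizing $\gamma\mathcal{S}$ as the set of points at which every pair of elements of the generated semigroup commutes, and then verify that property at $Tx$, at fixed points, and at $\tau$-limit points (your preimage-of-the-diagonal argument for claim (3) is just the closed-set form of the paper's net argument, with the same tacit use of Hausdorffness). If anything, your treatment of claim (1) is slightly more careful than the paper's, since you check commutation at $Tx$ for arbitrary $U,V$ in the generated semigroup rather than only for generators.
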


\begin{proof}
% \newline
Claim (\ref{claim1})\newline
For a fixed $T\in \mathcal{S}$, take $x\in \gamma \mathcal{S}$. Then for any
$T_{1},T_{2}\in \mathcal{S}$,
\begin{equation*}
T_{1}T_{2}Tx=T_{1}(T_{2}T)x=T_{2}(TT_{1}x)=T_{2}T_{1}Tx
\end{equation*}%
and from the maximality of $\gamma \mathcal{S}$, $Tx$ must be also the
element of $\gamma \mathcal{S}$. \newline
Claim (\ref{claim2})\newline
If $T_{1},\,T_{2}\in \mathcal{S}$ and $x\in Fix\mathcal{S}$, then obviously
$T_{1}T_{2}x=x=T_{2}T_{1}x$
and $x\in \gamma \mathcal{S}$, by the maximality argument. \newline
Claim (\ref{claim3})\newline
Take a net $(x_{\alpha })\subset \gamma\mathcal{S}$ with $\lim_{\alpha }x_{\alpha }=x$. Then
the mappings from $\mathcal{S}$ commute over element $x$:
\begin{equation*}
T_{1}T_{2}x=T_{1}T_{2}\lim_{\alpha }x_{\alpha }=\lim_{\alpha
}T_{1}T_{2}x_{\alpha }=\lim_{\alpha }T_{2}T_{1}x_{\alpha
}=T_{2}T_{1}\lim_{\alpha }x_{\alpha }=T_{2}T_{1}x.
\end{equation*}%
And again, we can use the maximality of $\gamma \mathcal{S}$ to infer the
desired result.
\end{proof}

Note that the above lemma can be stated shortly: if $\mathcal{S}$ is a
family of continuous mappings commuting at least at one point, then there is
a closed $\mathcal{S}$-invariant superset of $Fix\mathcal{S}$, on which $%
\mathcal{S}$ commutes.

\begin{lemma}
If $\mathcal{S}$ is a family of continuous mappings on a compact set $C$
which commute at least over one point, then there exists a compact subset of
$C$ on which $\mathcal{S}$ consists of commuting surjections.
\end{lemma}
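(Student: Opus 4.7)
The plan is to combine the two preceding lemmas in the obvious way: first use Lemma \ref{lemmaInv} to extract a compact set on which the family commutes, and then apply Lemma \ref{subsurjectivity} to get surjectivity on top of commutativity.

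More precisely, I would begin by observing that the hypothesis ``commute at least over one point'' is exactly the statement $\gamma\mathcal{S}\neq\emptyset$. By Lemma \ref{lemmaInv}, the set $\gamma\mathcal{S}$ is $\mathcal{S}$-invariant and closed in $C$; since $C$ is compact, $\gamma\mathcal{S}$ is a nonempty compact subset of $C$. Moreover, every $T\in\mathcal{S}$ restricts to a continuous self-map of $\gamma\mathcal{S}$, and by the very definition of $\gamma\mathcal{S}$ these restrictions form a commuting family.

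At this point I would simply apply Lemma \ref{subsurjectivity} to the commuting family $\{T|_{\gamma\mathcal{S}}:T\in\mathcal{S}\}$ of continuous self-maps on the compact set $\gamma\mathcal{S}$. This yields a compact subset $B\subset\gamma\mathcal{S}\subset C$ such that each $T|_{\gamma\mathcal{S}}$ is surjective on $B$, i.e., $TB=B$ for every $T\in\mathcal{S}$. Since $B\subset\gamma\mathcal{S}$, the family $\mathcal{S}$ also commutes on $B$, so $B$ is the desired compact set on which $\mathcal{S}$ consists of commuting surjections.

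I do not expect any real obstacle here: the work was done in Lemma \ref{lemmaInv} (which produced an $\mathcal{S}$-invariant closed superset of $\Fix\mathcal{S}$ on which $\mathcal{S}$ commutes) and in Lemma \ref{subsurjectivity} (the Kuratowski--Zorn argument for subsurjectivity on a compact set with a commuting family). The only thing to be careful about is the invariance statement, which guarantees that the restrictions $T|_{\gamma\mathcal{S}}$ are genuine self-maps so that Lemma \ref{subsurjectivity} is applicable; this is exactly claim (\ref{claim1}) of Lemma \ref{lemmaInv}.
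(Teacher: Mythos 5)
Your proposal is correct and follows essentially the same route as the paper: form $\gamma\mathcal{S}$ (nonempty by hypothesis, compact and $\mathcal{S}$-invariant by Lemma \ref{lemmaInv}), then apply Lemma \ref{subsurjectivity} on that set to obtain the compact subset of commuting surjections.
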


\begin{proof}
Let
$
\mathcal{S}|_{C}=\{T|_{C}:T\in \mathcal{S}\}.
$
With the aid of Lemma \ref{lemmaInv}, we see that $A=\gamma (\mathcal{S}%
|_{C})$ is a nonempty, $\mathcal{S}$-invariant, compact set on which $%
\mathcal{S}$ commutes. Then use Lemma \ref{subsurjectivity} to obtain
another set $B\subset A$ with the same properties, but on which $\mathcal{S}$
is surjective.
\end{proof}

As it is known (cf. \cite{Kirk} and \cite[p. 61]{BaRo} for the proof):

\begin{theorem}[Freudenthal-Hurewicz]
A nonexpansive surjection on a compact metric space is an isometry.
\end{theorem}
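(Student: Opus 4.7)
Let $(X, d)$ denote the compact metric space and $T\colon X \to X$ the nonexpansive surjection; the goal is to prove $d(Tx, Ty) = d(x, y)$ for every $x, y \in X$. My first step is a reformulation: since by surjectivity every pair $(a, b) \in X \times X$ is a preimage of $(Ta, Tb)$ under $T \times T$, it suffices to show that for every $(x, y) \in X \times X$ and every choice of preimages $x_1, y_1$ with $Tx_1 = x$ and $Ty_1 = y$ one has $d(x_1, y_1) = d(x, y)$. Indeed, specializing to $x = Ta$, $y = Tb$, $x_1 = a$, $y_1 = b$ then recovers the isometry, since $d(a, b) \geq d(Ta, Tb) = d(x, y) = d(x_1, y_1) = d(a, b)$.

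To prove the reformulation, I would set $(x_0, y_0) = (x, y)$, take $(x_1, y_1)$ to be the prescribed preimage pair, and iterate surjectivity to build a full backward orbit $(x_n, y_n)_{n \geq 0}$ with $Tx_n = x_{n-1}$ and $Ty_n = y_{n-1}$ for all $n \geq 1$. Nonexpansiveness of $T$ makes $(d(x_n, y_n))_n$ non-decreasing, and the bound $\diam X$ forces convergence to some $L \geq d(x, y)$. By compactness of $X \times X$ I can extract a subsequence $(x_{n_k}, y_{n_k}) \to (x^*, y^*)$ with $d(x^*, y^*) = L$. The pivotal observation is that $T^{n_k}(x_{n_k}) = x_0 = x$, so nonexpansiveness of $T^{n_k}$ yields $d(T^{n_k}(x^*), x) \leq d(x^*, x_{n_k}) \to 0$, hence $T^{n_k}(x^*) \to x$, and analogously $T^{n_k}(y^*) \to y$. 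Then
\[
d(x, y) = \lim_k d(T^{n_k}(x^*), T^{n_k}(y^*)) \leq d(x^*, y^*) = L,
\]
and combining with $L \geq d(x, y)$ forces $L = d(x, y)$. Since the monotone sequence $(d(x_n, y_n))_n$ starts at $d(x, y)$ and ascends to $L = d(x, y)$, it must be constant, so in particular $d(x_1, y_1) = d(x, y)$, as required.

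I expect the main obstacle to be choosing the right setup. Naive forward iteration of $T$ at $(x, y)$ with extraction of an $\omega$-limit pair $(x^*, y^*)$ only proves that $T$ acts isometrically at $(x^*, y^*)$, not at $(x, y)$ itself. The backward-orbit approach exploits surjectivity twice--once to generate the sequence, and once, crucially, to let the first step $(x_1, y_1)$ be the specific preimage pair under scrutiny--so that the asymptotic equality $L = d(x, y)$ reaches back and forces the desired distance equality already at $n = 1$.
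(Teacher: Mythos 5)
Your argument is correct and complete. Note, however, that the paper does not prove this statement at all: it is quoted as a known result (Freudenthal--Hurewicz) with the proof delegated to the cited references \cite{Kirk} and \cite[p.~61]{BaRo}, so there is no in-paper argument to compare against. Your backward-orbit proof is essentially the classical one: surjectivity lets you lift $(x,y)$ to a backward orbit $(x_n,y_n)$ along which $d(x_n,y_n)$ is non-decreasing and bounded, compactness of $X\times X$ supplies a convergent subsequence $(x_{n_k},y_{n_k})\to(x^*,y^*)$ with $d(x^*,y^*)=L$, and the identity $T^{n_k}x_{n_k}=x$ together with nonexpansiveness of $T^{n_k}$ pins $L$ back down to $d(x,y)$, forcing the monotone sequence to be constant and in particular $d(x_1,y_1)=d(x,y)$; specializing the preimage pair to $(a,b)$ with $(x,y)=(Ta,Tb)$ yields $d(Ta,Tb)=d(a,b)$. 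Each step checks out (the monotonicity direction, the continuity of $d$ used twice, and the final specialization), so this is a valid self-contained substitute for the external citation.
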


We obtain instantly

\begin{theorem}
If $\mathcal{S}$ is a family of nonexpansive mappings, commuting somewhere
(and defined) on the compact metric space $C$, then there exists a compact
set $A\subset C$ on which $\mathcal{S}$ are surjective commuting isometries.
\end{theorem}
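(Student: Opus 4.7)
The plan is to deduce the statement directly by chaining the immediately preceding lemma with the Freudenthal--Hurewicz theorem. The hypothesis that $\mathcal{S}$ commutes somewhere on $C$ means exactly that $\gamma\mathcal{S}\neq\emptyset$, which is the premise of the previous lemma, and every nonexpansive map is automatically continuous, so that lemma applies verbatim. Applying it, I would first obtain a nonempty compact subset $A\subset C$ with the property that each $T\in\mathcal{S}$ restricts to a surjection $T|_A\colon A\to A$ and that the elements of $\mathcal{S}$ commute pairwise when restricted to $A$.

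The next step is to upgrade ``surjection'' to ``isometry.'' Each restricted map $T|_A$ is still nonexpansive (restrictions of nonexpansive maps are nonexpansive) and, by construction, is a surjection of the compact metric space $A$ onto itself. The Freudenthal--Hurewicz theorem then forces $T|_A$ to be an isometry. Hence on the single compact set $A$ all three required properties---surjectivity, commutativity, and the isometry property---hold simultaneously for every element of $\mathcal{S}$, which is exactly what the theorem asserts.

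I do not anticipate a genuine obstacle, since the argument is purely a concatenation of results already established in the excerpt; the only thing worth verifying carefully is that the ``somewhere commuting'' hypothesis in the statement is literally the hypothesis needed by the preceding lemma, and that nonexpansiveness supplies the continuity that lemma requires.
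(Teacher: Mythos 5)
Your proposal is correct and matches the paper's intent exactly: the paper states the theorem with ``We obtain instantly,'' meaning precisely the concatenation you describe of the preceding lemma (which supplies the compact subset with commuting surjections) with the Freudenthal--Hurewicz theorem (which upgrades each nonexpansive surjection of that compact set to an isometry). Nothing further is needed.
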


Thus $\mathcal{S}|_{A}$ generates the abelian group of isometries.
Now, let us deal with the generalizations related to Theorem \ref{firstPub}.

\begin{theorem}
\label{infThm} Let $C$ be a $\tau $-compact subset of a normed linear space $%
E$, where the norm is $\tau $-lower semicontinuous ($\tau $-LS) and let $%
\mathcal{S}$ be a family of $\tau $-continuous mappings on $C$ with the
following property: for every finite subfamily $\mathcal{A}\subset \mathcal{S%
}$, $Fix\mathcal{A}$ is a nonexpansive retract of $C$. Then $Fix\mathcal{S}$
is also a nonexpansive retract of $C$.
\end{theorem}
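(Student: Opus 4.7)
The plan is to realize $R$ as a pointwise $\tau$-limit, along a subnet, of the retractions provided by the finite-subfamily hypothesis. Let $\mathcal{F}$ denote the collection of finite subfamilies of $\mathcal{S}$, directed by inclusion, and for each $\mathcal{A}\in\mathcal{F}$ fix a nonexpansive retraction $R_{\mathcal{A}}:C\to\Fix\mathcal{A}$. This gives a net $(R_{\mathcal{A}})_{\mathcal{A}\in\mathcal{F}}$ in the product space $C^{C}$. Since $C$ is $\tau$-compact, $C^{C}$ endowed with the topology of pointwise $\tau$-convergence is compact by Tychonoff's theorem, and so we may pass to a subnet $(R_{\mathcal{A}_{j}})_{j}$ with $R_{\mathcal{A}_{j}}x\to Rx$ in $\tau$ for every $x\in C$, where $R\in C^{C}$.

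Three properties of $R$ must be verified. For nonexpansiveness, take $x,y\in C$; since the vector operations on $E$ are $\tau$-continuous in the motivating settings (weak, weak$^{\ast}$, and norm topologies), $R_{\mathcal{A}_{j}}x-R_{\mathcal{A}_{j}}y\to Rx-Ry$ in $\tau$, and the $\tau$-lower semicontinuity of the norm combined with nonexpansiveness of each $R_{\mathcal{A}_{j}}$ gives
\begin{equation*}
\|Rx-Ry\|\leq\liminf_{j}\|R_{\mathcal{A}_{j}}x-R_{\mathcal{A}_{j}}y\|\leq\|x-y\|.
\end{equation*}
For the range condition $R(C)\subset\Fix\mathcal{S}$, fix $T\in\mathcal{S}$ and $x\in C$. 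The set $\{\mathcal{A}\in\mathcal{F}:T\in\mathcal{A}\}$ is cofinal in $\mathcal{F}$, so along the subnet eventually $T\in\mathcal{A}_{j}$, whence $TR_{\mathcal{A}_{j}}x=R_{\mathcal{A}_{j}}x$; the $\tau$-continuity of $T$ and passage to the $\tau$-limit then give $T(Rx)=Rx$. Finally, if $x\in\Fix\mathcal{S}$ then $x\in\Fix\mathcal{A}$ for every $\mathcal{A}\in\mathcal{F}$, so $R_{\mathcal{A}}x=x$ for every $\mathcal{A}$, and therefore $Rx=x$.

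The main obstacle is the invocation of $\tau$-lower semicontinuity of the norm to transfer nonexpansiveness to the limit: this implicitly uses that $\tau$ is compatible with the linear structure enough for differences of $\tau$-convergent nets to converge to the difference of the limits. This is automatic in the examples the paper targets, and I would take it as part of the standing assumptions. Beyond that, the argument is essentially bookkeeping with Tychonoff compactness in $C^{C}$, cofinality of subnets in $\mathcal{F}$, and the $\tau$-continuity of each $T\in\mathcal{S}$.
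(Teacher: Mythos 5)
Your proposal is correct and follows essentially the same route as the paper: directing the finite subfamilies by inclusion, extracting a pointwise $\tau$-convergent subnet of the retractions $R_{\mathcal{A}}$ via Tychonoff compactness of $C^{C}$, and using $\tau$-lower semicontinuity for nonexpansiveness of the limit. The only cosmetic difference is that you pass $T$ through the limit by $\tau$-continuity where the paper argues via $\tau$-closedness of $\Fix T$, and your caveat about the compatibility of $\tau$ with the linear structure is the same implicit assumption the paper makes in equation (\ref{nonExpLim}).
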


\begin{proof}
We may assume that $\mathcal{S}$ is infinite. For later use, notice that for
$T\in \mathcal{S}$ the set $FixT$ is $\tau $-closed. Consider
\begin{equation*}
\Lambda =\{\alpha \subset \mathcal{S}:\#\alpha <\infty \}
\end{equation*}%
as the directed set with the order relation $\leq $ as inclusion. Denote by $%
R_{\alpha }$ a nonexpansive retraction from $C$ to $Fix_{\alpha
}=\bigcap_{T\in \alpha }FixT$, whose existence follows from the assumptions.
This construction gives us a net $(R_{\alpha })_{\alpha \in \Lambda }$ in
the $\tau $-compact space $C^{C}$ (Tychonoff's theorem), and we can select a
$\tau $-convergent subnet $(R_{\alpha _{\gamma }})_{\gamma \in \Gamma }$.
Using the definition of the compact topology, it is easy to deduce that we
can define a mapping $R$ such that for every $x\in C$,
\begin{equation*}
Rx=\tau \mhyp\lim_{\gamma }R_{\alpha _{\gamma }}x.
\end{equation*}%
For $T\in \mathcal{S}$, take $\gamma _{0}$ such that for every $\gamma \geq
\gamma _{0},$ we have $\alpha _{\gamma }\geq \{T\}$. It follows
straightforward from subnet's definition. Then for any $x\in C,$
\begin{equation*}
\forall _{\gamma \geq \gamma _{0}}R_{\alpha _{\gamma }}x\in Fix_{\alpha
_{\gamma }}\subset Fix_{\alpha _{\gamma _{0}}}\subset FixT
\end{equation*}%
and hence, $R_{\alpha _{\gamma }}x$ lies eventually in the $\tau $-closed
set $FixT$. That is, $Rx\in FixT$, and from the freedom of choice of $T$ and
$x$, we have $RC\subset Fix\mathcal{S}$. But also
\begin{equation*}
x\in Fix\mathcal{S}\implies x\in Fix_{\alpha }\implies R_{\alpha
}x=x\implies Rx=x\implies x\in FixR.
\end{equation*}%
Hence
$
RC\subset Fix\mathcal{S}\subset FixR\subset RC
$,
which shows that $R$ is a retraction from $C$ to $Fix\mathcal{S}$. It
remains to use the $\tau $-lower semicontinuity to prove that $R$ is
nonexpansive:
\begin{equation}
\norm{Rx-Ry}=\norm{\tau\mhyp\lim_{\gamma}R_{\alpha_{\gamma}}x-\tau\mhyp%
\lim_{\gamma}R_{\alpha_{\gamma}}y}\leq \liminf_{\gamma }\norm{R_{\alpha_{%
\gamma}}x-R_{\alpha_{\gamma}}y}\leq \norm{x-y}.  \label{nonExpLim}
\end{equation}
\end{proof}

\begin{remark}
\label{remarkModi} The above theorem can be easily modified at least in a
few ways:

\begin{enumerate}
\item we can replace the $\tau $-continuity of mappings from $\mathcal{S}
$ with an assumption that their fixed point sets are $\tau $-closed,

\item the proof can be stated in metric spaces, assuming the following
definition of $\tau $-LS
\begin{equation*}
d(\tau \mhyp\lim_{\alpha }x_{\alpha },0)\leq \liminf_{\alpha }d(x_{\alpha
},0),
\end{equation*}

\item let $\mathcal{S}$ be a family of continuous mappings defined on a
compact set $C$. If for every finite subfamily $\mathcal{A}\subset \mathcal{S%
}$ there exists a retraction $C\rightarrow Fix\mathcal{A}$, then the same
holds for the whole family $\mathcal{S}$,

\item \label{remarkFirlmly}Recall the firmly nonexpansive mapping $T:C\to C$ is defined by the following equation $$\norm{Tx-Ty}\leq\norm{a(x-y)+(1-a)(Tx-Ty)}$$, which needs to hold for every $a\in (0,1)$. Then notice that if $\tau $ denotes the strong topology and every
$Fix\mathcal{A}$ is a firmly nonexpansive retract, then $Fix\mathcal{S}$ is
also a firmly nonexpansive retract- rewrite equation (\ref{nonExpLim}%
) from the end of the proof (with $a\in (0,1)$):

\begin{equation*}
\norm{Rx-Ry}=\norm{\lim_{\gamma}R_{\alpha_{\gamma}}x-\lim_{\gamma}R_{%
\alpha_{\gamma}}y}=
\end{equation*}%
\begin{equation*}
\lim_{\gamma }\norm{R_{\alpha_{\gamma}}x-R_{\alpha_{\gamma}}y}\leq
\lim_{\gamma }\norm{a(x-y)+(1-a)(R_{\alpha_{\gamma}}x-R_{\alpha_{\gamma}}y)}=
\end{equation*}%
\begin{equation*}
\norm{a(x-y)+(1-a)(\lim_{\gamma}R_{\alpha_{\gamma}}x-\lim_{\gamma}R_{%
\alpha_{\gamma}}y)}=\norm{a(x-y)+(1-a)(Rx-Ry)},
\end{equation*}

\item we can change a 'nonexpansive retract' to a '$\tau $-continuous affine
retract' (i.e. retract, for which there exists a $\tau $-continuous affine
retraction) in the assumptions. As a result, we get an affine retract this
time. The $\tau $-LS assumption is superfluous. Again, we only need to
modify the end of the original proof.
\end{enumerate}
\end{remark}

As we can see, Theorem (\ref{infThm}) may have many variants but in order to
fuel them, the finite case theorem is always needed. Otherwise, they may be
vacuously true. One source of such supporting theorems is the method used by
Bruck \cite{Br2}, and recently by Saedi \cite{SaIs}. To extract it in a
general way, for the readability, let us provide the following
definition.

\begin{definition}
For a given family $\mathcal{S}$ of mappings defined on a set $C$, let $%
\mathcal{R}(\mathcal{S})$ denote the set of all retractions from $C$ to $Fix%
\mathcal{S}$. For the singleton we use the shorthand $\mathcal{R}(\{T\})=%
\mathcal{R}(T)$.
\end{definition}

Since we will use it later, we give without proof the following simple lemma
(cf. \cite{BoWi}).

\begin{lemma}
\label{commute_retract} Let $\mathcal{S}$ be a family of mappings and
suppose there exists a retraction $R$ onto $Fix\mathcal{S}$. If $T$ commutes
with every member of the family $\mathcal{S}$, then
$
FixT\cap Fix\mathcal{S}=FixTR.
$
\end{lemma}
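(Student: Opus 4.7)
The plan is to prove set equality by checking both inclusions, using the defining properties of a retraction (namely that $R$ fixes $Fix\mathcal{S}$ pointwise and maps everything into $Fix\mathcal{S}$) together with the commutativity of $T$ with each $S\in\mathcal{S}$.

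For the inclusion $FixT\cap Fix\mathcal{S}\subset FixTR$, I would just take $x\in FixT\cap Fix\mathcal{S}$. Since $x\in Fix\mathcal{S}$ and $R$ is a retraction onto $Fix\mathcal{S}$, we have $Rx=x$, so $TRx=Tx=x$, giving $x\in FixTR$. This direction is essentially immediate and uses only the retraction property, not the commutativity.

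The reverse inclusion $FixTR\subset FixT\cap Fix\mathcal{S}$ is where the commutativity hypothesis does all the work. Take $x\in FixTR$, so $TRx=x$. The key observation is that $TRx$ lies in $Fix\mathcal{S}$: for any $S\in\mathcal{S}$, since $Rx\in Fix\mathcal{S}$ we have $SRx=Rx$, and then by commutativity $S(TRx)=T(SRx)=TRx$. Thus $x=TRx\in Fix\mathcal{S}$. Having $x\in Fix\mathcal{S}$ forces $Rx=x$, so $Tx=TRx=x$, i.e.\ $x\in FixT$ as well.

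There is no real obstacle here; the only thing to be careful about is not to accidentally use the nonexpansiveness of $T$ or $R$ (neither is needed) and to keep the roles of $R$ as retraction, $T$ as commuting map, and $x$ as an arbitrary element in the respective set cleanly separated. The argument is purely set-theoretic/algebraic and works in the full generality stated, which is presumably why the authors omit the proof.
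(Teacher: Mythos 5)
Your proof is correct. The paper deliberately omits the proof of this lemma (citing [BoWi] instead), and your two-inclusion argument --- using the retraction property for the easy direction and commutativity with each $S\in\mathcal{S}$ to show $TRx\in Fix\mathcal{S}$ for the reverse --- is exactly the standard argument one would expect there; your remark that nonexpansiveness is never needed is also accurate, since the lemma is purely algebraic.
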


\begin{theorem}
\label{bruck} Let a semigroup $\mathcal{A}\subset C^{C}$ has the following
property:
\begin{equation}
\forall _{T\in \mathcal{A}}\mathcal{R}(T)\cap \mathcal{A}\not=\emptyset .
\label{bruck_ass}
\end{equation}%
Then for every finite commuting family $\mathcal{S}\subset
\mathcal{A}$ we have also
\begin{equation*}
\mathcal{R}(\mathcal{S})\cap \mathcal{A}\not=\emptyset .
\end{equation*}
\end{theorem}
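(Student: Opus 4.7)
The plan is to induct on $n = \#\mathcal{S}$ and at each step use Lemma \ref{commute_retract} to reduce the problem of finding a retraction onto $\mathrm{Fix}\,\mathcal{S}$ to finding a retraction onto the fixed point set of a single element of $\mathcal{A}$, which is furnished by the hypothesis \eqref{bruck_ass}.

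For the base case $n=1$, say $\mathcal{S}=\{T\}$, the conclusion $\mathcal{R}(T)\cap\mathcal{A}\neq\emptyset$ is exactly assumption \eqref{bruck_ass}. For the inductive step, suppose the theorem holds for all commuting subfamilies of $\mathcal{A}$ of cardinality at most $n$, and let $\mathcal{S}=\{T_1,\ldots,T_{n+1}\}\subset\mathcal{A}$ be commuting. Applying the inductive hypothesis to $\mathcal{S}'=\{T_1,\ldots,T_n\}$ yields a retraction $R\in\mathcal{R}(\mathcal{S}')\cap\mathcal{A}$. Since $T_{n+1}$ commutes with every member of $\mathcal{S}'$, Lemma \ref{commute_retract} gives
\begin{equation*}
\mathrm{Fix}\,\mathcal{S}=\mathrm{Fix}\,T_{n+1}\cap\mathrm{Fix}\,\mathcal{S}'=\mathrm{Fix}(T_{n+1}R).
\end{equation*}
Because $\mathcal{A}$ is a semigroup and both $T_{n+1}$ and $R$ lie in $\mathcal{A}$, the composition $T_{n+1}R$ is in $\mathcal{A}$, so assumption \eqref{bruck_ass} produces some $R'\in\mathcal{R}(T_{n+1}R)\cap\mathcal{A}$. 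This $R'$ is then a retraction onto $\mathrm{Fix}(T_{n+1}R)=\mathrm{Fix}\,\mathcal{S}$ belonging to $\mathcal{A}$, completing the induction.

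The only real subtlety is a bookkeeping one: one must be careful that at each induction step the newly constructed object $T_{n+1}R$ really belongs to $\mathcal{A}$ (which is why $\mathcal{A}$ is assumed to be a semigroup, not just a family), and that the commutation hypothesis in Lemma \ref{commute_retract} is met by $T_{n+1}$ against $\mathcal{S}'$ (which follows from the commutativity of the finite family $\mathcal{S}$ we started with, not from any commutation of $T_{n+1}$ with the auxiliary retraction $R$). No other nontrivial obstacle arises.
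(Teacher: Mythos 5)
Your proposal is correct and follows essentially the same route as the paper's own proof: induction on the size of $\mathcal{S}$, using the semigroup property to ensure $T_{n+1}R\in\mathcal{A}$, the hypothesis (\ref{bruck_ass}) to obtain a retraction onto $\mathrm{Fix}(T_{n+1}R)$, and Lemma \ref{commute_retract} to identify $\mathrm{Fix}(T_{n+1}R)$ with $\mathrm{Fix}\,\mathcal{S}_{n+1}$. Your closing remark about which commutation hypothesis is actually needed is a helpful clarification, but the argument itself is the same.
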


\begin{proof}
If family $\mathcal{S}$ is a singleton, then theorem is true directly from the assumptions.
Let the theorem be true for the families with $n$ elements.
Consider $\mathcal{S}_{n+1}=\{T_{1},\ldots,T_{n+1}\}$ and it's subfamily $\mathcal{S}_{n}=\{T_{1},\ldots,T_{n}\}$.
Then from the induction hypothesis exists retraction $R_{n}\in \mathcal{R}(%
\mathcal{S}_{n})\cap \mathcal{A}$. Since $T_{n+1}R_{n}\in \mathcal{A}$, there is also in $%
\mathcal{A}$ a retraction $R_{n+1}$ from $C$ to $Fix(T_{n+1}R_{n})$. Lemma %
\ref{commute_retract} gives $Fix(T_{n+1}R_{n})=Fix\mathcal{S}_{n+1}$, so $%
R_{n+1}$ belongs also to $\mathcal{R}(\mathcal{S}_{n+1})$. 
This proves the theorem via mathematical induction.
\end{proof}

The idea is that $\mathcal{A}$ represents a well behaved property of
mappings, e.g. nonexpansivity, isometricity, being affine, and so on.
Nevertheless, the presented reasoning would fail if the set of interest from
Theorem \ref{bruck}--the family $\mathcal{A}$--would not posses the
semigroup structure or, more specifically, we could not know whether $%
T_{n+1}R_{n}\in \mathcal{A}$. In those cases we must look for the alternate
ways of generating the appropriate retraction $R_{n+1}$. One of such
arguments will be presented below.

\begin{lemma}
\label{approx_fix} Let $\mathcal{S}$ be a commuting family of nonexpansive
mappings and suppose there exists a nonexpansive retraction $R$ onto $Fix%
\mathcal{S}$. Suppose that a nonexpansive $T$ commutes with members of $%
\mathcal{S}$. Then every approximate fixed point sequence $(x_{n})$ of $TR$
is also the approximate fixed point sequence of both the family $\mathcal{S}$
and the mapping $T$.
\end{lemma}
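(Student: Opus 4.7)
The plan is to reduce everything to a single key intermediate fact: that $(x_n)$ is an approximate fixed point sequence for the retraction $R$ itself, i.e., $\norm{x_n - R x_n}\to 0$. Once this is in hand, the two claims (AFP for each $S\in\mathcal{S}$ and AFP for $T$) drop out of one-line triangle inequalities. For any $S\in\mathcal{S}$, since $Rx_n\in\Fix\mathcal{S}$ one has $SRx_n=Rx_n$, so by nonexpansivity of $S$,
\begin{equation*}
\norm{x_n - S x_n}\leq \norm{x_n - R x_n}+\norm{R x_n - S R x_n}+\norm{S R x_n - S x_n}\leq 2\norm{x_n - R x_n}.
\end{equation*}
For $T$, nonexpansivity gives $\norm{T x_n - T R x_n}\leq\norm{x_n - R x_n}$, so $\norm{x_n - T x_n}\leq \norm{x_n - T R x_n}+\norm{T R x_n - T x_n}\to 0$.

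The crux is the reduction itself, which rests on the following observation: the image $TR(C)$ actually lies in $\Fix\mathcal{S}$. Indeed, for any $x\in C$ and $S\in\mathcal{S}$, the commutation of $T$ with $S$ and the identity $SRx=Rx$ yield $STRx=TSRx=TRx$, i.e.\ $TRx\in\Fix\mathcal{S}=\Fix R$. Consequently $R(TRx)=TRx$ for every $x$.

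Using this, I would estimate
\begin{equation*}
\norm{x_n - R x_n}\leq \norm{x_n - T R x_n}+\norm{T R x_n - R x_n}=\norm{x_n - T R x_n}+\norm{R(T R x_n)- R x_n},
\end{equation*}
and then apply nonexpansivity of $R$ to the second term to bound it by $\norm{T R x_n - x_n}$, giving $\norm{x_n - R x_n}\leq 2\norm{x_n - T R x_n}\to 0$. The only mildly non-obvious step in the whole argument is spotting that $TRx$ lies in $\Fix\mathcal{S}$, which is what turns the nonexpansivity of $R$ into a usable bound; everything else is mechanical triangle-inequality work and I do not anticipate any real obstacle.
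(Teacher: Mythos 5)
Your proof is correct and rests on exactly the same key observation as the paper's: the commutation identity $STRx = TSRx = TRx$ shows $TRx_n \in \Fix\mathcal{S} = \Fix R$, after which everything is nonexpansivity plus triangle inequalities. The only difference is organizational — you funnel both conclusions through the single estimate $\norm{x_n - Rx_n}\to 0$, while the paper applies a small abstract observation (if $PQx_n = Qx_n$ with $P$ nonexpansive, then an AFP sequence of $Q$ is one of $P$) twice, once with $P=S$ and once with $P=R$ — so this is essentially the same argument.
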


\begin{proof}
Note the following fact: if $(x_{n})$ is an approximate fixed
point sequence of the mapping $Q$, and for a nonexpansive $P$ we have $%
PQx_{n}=Qx_{n}$, then $(x_{n})$ is also an approximate fixed point sequence
of $P$. Indeed
\begin{equation*}
\norm{Px_{n}-x_{n}}\leq \norm{Px_{n}-Qx_{n}}+\norm{Qx_{n}-x_{n}}=
\end{equation*}%
\begin{equation*}
\norm{Px_{n}-PQx_{n}}+\norm{Qx_{n}-x_{n}}\leq 2\norm{Qx_{n}-x_{n}}%
\rightarrow 0.
\end{equation*}%
Now, notice that for any $S\in \mathcal{S}$ from the lemma we have
\begin{equation}
STRx_{n}=TSRx_{n}=TRx_{n}  \label{long_eq}
\end{equation}%
so if we denote $Q=TR$ and $P=S$, we obtain
$
\norm{Sx_{n}-x_{n}}\rightarrow 0.
$
From the equation (\ref{long_eq}) we also conclude that $TRx_{n}$ is a
common fixed point of the family $\mathcal{S}$, which gives
$
RTRx_{n}=TRx_{n}.
$
Now, letting $Q=TR$ and $P=R,$ yields
$
\norm{Rx_{n}-x_{n}}\rightarrow 0.
$
It follows that
\begin{equation*}
\norm{Tx_{n}-x_{n}}\leq \norm{Tx_{n}-TRx_{n}}+\norm{TRx_{n}-x_{n}}%
\rightarrow 0.
\end{equation*}
\end{proof}

\begin{theorem}
\label{proofFiniteRetract}Let $C$ be a nonempty $\tau $-compact convex and
bounded subset of a Banach space such that its norm is $\tau $-LS. Let $%
\mathcal{S}$ be a finite commuting family of nonexpansive $\tau $-continuous
self mappings on $C$. Then $Fix\mathcal{S}$ is a nonempty nonexpansive
retract of $C$.
\end{theorem}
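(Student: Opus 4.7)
The plan is to argue by induction on $n=|\mathcal{S}|$, at each stage building the required retraction by combining the Banach contraction principle, an ultrafilter $\tau$-limit, and Lemma \ref{approx_fix}. A quick preliminary: the norm-closedness of $C$ (hence its norm-completeness, needed to invoke Banach's principle) follows from $\tau$-compactness together with $\tau$-LS of the norm applied to an appropriately translated $\tau$-convergent subnet of any norm-convergent sequence in $C$.

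\textbf{Base case} ($\mathcal{S}=\{T\}$). For each $y\in C$ and $k\in\mathbb{N}$, convexity makes $V_{y,k}\colon z\mapsto \tfrac{1}{k}y+(1-\tfrac{1}{k})Tz$ a strict $(1-\tfrac{1}{k})$-contraction of $C$ into itself, so it has a unique fixed point $y_k\in C$. Rewriting the fixed-point equation as $y_k-Ty_k=\tfrac{1}{k}(y-Ty_k)$ and using boundedness of $C$ gives $\norm{y_k-Ty_k}\le \diam(C)/k\to 0$. Fix a non-principal ultrafilter $\mathcal{U}$ on $\mathbb{N}$ and set $Ry=\tau\mhyp\lim_{\mathcal U}y_k$, which exists by $\tau$-compactness. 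The $\tau$-continuity of $T$ yields $Ty_k\to TRy$ in $\tau$, so $\tau$-LS of the norm gives $\norm{TRy-Ry}\le \liminf\norm{Ty_k-y_k}=0$ and thus $Ry\in\Fix T$. Subtracting the fixed-point equations for $y,y'$ gives $\norm{y_k-y'_k}\le\norm{y-y'}$, and another application of $\tau$-LS shows $R$ is nonexpansive. Finally, for $y\in\Fix T$ the point $y$ itself solves $V_{y,k}z=z$, so by uniqueness $y_k=y$ and $Ry=y$; hence $R$ is a retraction onto $\Fix T$.

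\textbf{Inductive step.} Given a nonexpansive retraction $R_n\colon C\to \Fix\mathcal S_n$ for $\mathcal S_n=\{T_1,\dots,T_n\}$, set $\mathcal S_{n+1}=\mathcal S_n\cup\{T_{n+1}\}$ and $U=T_{n+1}R_n$. Then $U$ is nonexpansive but need not be $\tau$-continuous. Repeat the base-case construction with $U$ in place of $T$: for each $y\in C$ Banach's principle yields $z_k\in C$ with $\norm{z_k-Uz_k}\le \diam(C)/k\to 0$, so $(z_k)$ is an approximate fixed point sequence of $U$. Lemma \ref{approx_fix}, applied with family $\mathcal S_n$, retraction $R_n$, and mapping $T_{n+1}$ (which commutes with $\mathcal S_n$ since $\mathcal S_{n+1}$ is commuting), then promotes $(z_k)$ to an approximate fixed point sequence of every $T_i\in\mathcal S_{n+1}$. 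Defining $R_{n+1}y=\tau\mhyp\lim_{\mathcal U}z_k$, the $\tau$-continuity of each $T_i$ together with $\tau$-LS forces $R_{n+1}y\in \Fix T_i$ for every $i$, so $R_{n+1}y\in\Fix\mathcal S_{n+1}$. Nonexpansivity of $R_{n+1}$ follows exactly as in the base case, and if $y\in\Fix\mathcal S_{n+1}$ then $Uy=T_{n+1}R_n y=T_{n+1}y=y$, so uniqueness gives $z_k=y$ for every $k$ and $R_{n+1}y=y$.

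\textbf{Main obstacle.} The delicate point is that $R_n$ is in general not $\tau$-continuous, so neither is $U=T_{n+1}R_n$; this blocks both a direct appeal to Theorem \ref{bruck} (the semigroup of $\tau$-continuous nonexpansive self-maps is not preserved under composition with $R_n$) and the naive repetition of the base case, where $\tau$-continuity of the mapping was used to promote approximate fixed points to genuine ones. Lemma \ref{approx_fix} is the bridge: using the commutativity of $\mathcal S_{n+1}$, it transfers the approximate fixed-point property from the non-$\tau$-continuous $U$ back onto the $\tau$-continuous generators $T_1,\dots,T_{n+1}$, where $\tau$-LS can finally close the argument.
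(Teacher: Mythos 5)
Your proposal is correct and follows essentially the same route as the paper's proof: the Banach contraction principle applied to $z\mapsto \tfrac{1}{s}x+(1-\tfrac{1}{s})T_{n+1}R_{n}z$ to produce nonexpansive approximate-fixed-point maps, Lemma \ref{approx_fix} to transfer the approximate fixed points from the non-$\tau$-continuous composition $T_{n+1}R_{n}$ to the generators, and $\tau$-compactness plus $\tau$-LS to pass to the limit. The only (cosmetic) differences are your use of an ultrafilter limit in place of the paper's Tychonoff subnet and a direct verification that $\Fix\mathcal{S}_{n+1}\subset\Fix(T_{n+1}R_{n})$ where the paper invokes Lemma \ref{commute_retract}.
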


\begin{proof}
Let us impose one more restriction on $\mathcal{S}$: it has to possess identity as an element.
It is easy to see, that if we would have proof for such families, then the theorem would also follow for the unrestricted ones.
So let's continue with the restricted families.
Then if $\mathcal{S}$ is a singleton, the theorem is trivially true: the
retraction we are looking for is just the identity on $C$. 

In the spirit of the mathematical induction, let us now assume that there exists a
nonexpansive retraction for the given $\mathcal{S}_{n}=\{T_1,\ldots,T_n\}$:
$$
R_{n}:C\rightarrow Fix\mathcal{S}_{n}.
$$
From that we are going to construct another nonexpansive retraction (for the family $\mathcal{S}_{n+1}=\mathcal{S}_{n}\cup\{T_{n+1}\}$):
$$
R_{n+1}:C\rightarrow Fix\mathcal{S}_{n+1}
$$
which proves our point. 

Note that $C$ is strongly closed: assume $x_{\alpha }\rightarrow x$
strongly. From $\tau $-compactness of $C$, take a subnet $(x_{\alpha
_{\gamma }})_{\gamma }$ that has a $\tau $-limit in $C$. Then
\begin{equation*}
\norm{\tau\mhyp\lim_{\gamma}x_{\alpha_{\gamma}}-x}\leq \liminf_{\gamma }%
\norm{x_{\alpha_{\gamma}}-x}=\lim_{\gamma }\norm{x_{\alpha_{\gamma}}-x}=0.
\end{equation*}%
That is, $x=\tau \mhyp\lim_{\gamma }x_{\alpha _{\gamma }}\in C$. So $C$
is strongly closed, thus complete. Take
\begin{equation*}
T_{x,s}z=\frac{1}{s}x+\left( 1-\frac{1}{s}\right) T_{n+1}R_{n}z.
\end{equation*}%
Since $T_{x,s}$ is a contraction defined on the complete space, there exists
exactly one point $F_{s}x\in C$ such that
$
T_{x,s}F_{s}x=F_{s}x.
$
This defines the mapping $F_{s}:C\rightarrow C$. Note that
\begin{equation*}
\norm{F_{s}x-F_{s}y}=\norm{\frac{1}{s}(x-y)+(1-%
\frac{1}{s})(T_{n+1}R_{n}F_{s}x-T_{n+1}R_{n}F_{s}x)}\leq
\end{equation*}%
\begin{equation*}
\frac{1}{s}\norm{x-y}+(1-\frac{1}{s})\norm{F_{s}x-F_{s}y}
\end{equation*}%
from which we deduce the nonexpansivity of $F_{s}$.
Notice further that the fact
\begin{equation*}
F_{s}x=T_{x,s}F_{s}x=\frac{1}{s}x+(1-\frac{1}{s})T_{n+1}R_{n}F_{s}x
\end{equation*}%
and boundedness of $C$ implies
\begin{equation}
\norm{T_{n+1}R_{n}F_{s}x-F_{s}x}=\frac{1}{s}\norm{T_{n+1}R_{n}F_{s}x-x}\leq
\frac{diamC}{s}\rightarrow 0,  \label{approx_diam}
\end{equation}%
that is, $(F_{s})_{s\in \mathbb{N}}\subset C^{C}$ forms with every $x\in C$
an approximate fixed point sequence of the mapping $T_{n+1}R_{n}$. From
Lemma \ref{approx_fix} we conclude that the same is true for the family $%
\mathcal{S}_{n+1}$. Since $C$ is $\tau $-compact, 
then--from Tychonoff's theorem--$(F_{s})_{s\in \mathbb{N}}$ 
has a convergent subnet (in a pointwise convergence with respect to $\tau $-topology):
\begin{equation}
R_{n+1}=\tau \mhyp\lim_{\alpha }F_{s_{\alpha }}.  \label{retr_limit}
\end{equation}%
Then, for every $T\in \mathcal{S}_{n+1}$,
\begin{equation*}
\norm{TR_{n+1}x-R_{n+1}x}=\norm{T(\tau\mhyp\lim_{\alpha}F_{s_{\alpha}}x)-\tau%
\mhyp\lim_{\alpha}F_{s_{\alpha}}}x\leq \liminf_{\alpha }\norm{TF_{s_{%
\alpha}}x-F_{s_{\alpha}}x}\rightarrow 0,
\end{equation*}%
where convergence is the mentioned consequence of Lemma \ref{approx_fix}. This
gives $TR_{n+1}x=R_{n+1}x$, and from the freedom of choice of $x$ and $T$ we have
$
R_{n+1}C\subset FixS_{n+1}.
$
Observe that%
\begin{equation*}
x\in FixT_{n+1}R_{n}\implies T_{x,s}x=x\implies F_{s}x=x\implies R_{n+1}x=x
\end{equation*}%
which means
$
FixT_{n+1}R_{n}\subset FixR_{n+1}.
$
From Lemma \ref{commute_retract}, follows
\begin{equation*}
FixR_{n+1}\subset R_{n+1}C\subset Fix\mathcal{S}_{n+1}\subset
FixT_{n+1}R_{n}\subset FixR_{n+1},
\end{equation*}%
from which we see
$
R_{n+1}C=Fix\mathcal{S}_{n+1}=FixR_{n+1}.
$
So, $R_{n+1}$ is a retraction onto $Fix\mathcal{S}_{n+1}$. Its
nonexpansivness follows from the fact that $F_{s}$ is nonexpansive for every
$s,$ and from $\tau $-LS property (check the eq. (\ref%
{nonExpLim})).
\end{proof}

Now, having in mind Theorem \ref{infThm}, we have obviously the following
result.

\begin{theorem}
Let $C$ be a $\tau $-compact, convex and bounded subset of a Banach space
with $\tau $-LS norm. Then the set of common fixed points of a commuting
family of $\tau $-continuous nonexpansive mappings on $C$ is a nonempty
nonexpansive retract of $C$.
\end{theorem}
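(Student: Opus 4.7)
The plan is to observe that this final statement is an immediate corollary of the two main technical results already established, namely Theorem \ref{proofFiniteRetract} (which handles arbitrary finite commuting subfamilies) and Theorem \ref{infThm} (which upgrades a uniform finite-subfamily retract property to a retract property for the entire family).

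First I would verify that the hypotheses of Theorem \ref{infThm} are satisfied. Fix any finite subfamily $\mathcal{A}\subset\mathcal{S}$. Since $\mathcal{S}$ is commuting, so is $\mathcal{A}$; moreover, each mapping in $\mathcal{A}$ is a $\tau$-continuous nonexpansive self-map of $C$, and $C$ itself is $\tau$-compact, convex and bounded in a Banach space whose norm is $\tau$-LS. Thus the assumptions of Theorem \ref{proofFiniteRetract} are met for $\mathcal{A}$, and this theorem guarantees that $\Fix\mathcal{A}$ is a nonempty nonexpansive retract of $C$. In particular the nonexpansive-retract hypothesis appearing in Theorem \ref{infThm} holds for every finite subfamily.

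Next I would apply Theorem \ref{infThm} directly to $\mathcal{S}$. Its conclusion is precisely that $\Fix\mathcal{S}$ is a nonexpansive retract of $C$. Nonemptiness is then automatic: the retraction $R:C\to\Fix\mathcal{S}$ produced by Theorem \ref{infThm} is defined on the nonempty set $C$, so $\Fix\mathcal{S}\supseteq R(C)\neq\emptyset$.

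I do not anticipate a substantive obstacle here, since the real work has already been absorbed into the two preceding theorems: the Banach contraction principle, the approximate-fixed-point lemma (Lemma \ref{approx_fix}), and Lemma \ref{commute_retract} combine in Theorem \ref{proofFiniteRetract} to handle the induction step for finite families, while Tychonoff's theorem together with the $\tau$-lower semicontinuity of the norm is what powers Theorem \ref{infThm} in the passage from finite to arbitrary families. The only point worth checking carefully is that commutativity and $\tau$-continuity pass to finite subfamilies (which is trivial) so that the finite-case theorem really applies term-by-term.
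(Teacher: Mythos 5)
Your proposal is correct and follows exactly the paper's own route: the paper derives this theorem immediately by feeding Theorem \ref{proofFiniteRetract} (applied to each finite commuting subfamily) into Theorem \ref{infThm}, which is precisely your argument, including the observation that nonemptiness comes for free from the existence of the retraction onto $\Fix\mathcal{S}$.
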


Notice that we can infer also (compare \cite[Theorem 3]{Bruck2})

\begin{theorem}
Let $C$ be a nonempty compact convex subset of a Banach space. Then the
fixed point set of a commuting family of nonexpansive mappings on $C$ is a
nonempty firmly nonexpansive retract of $C$.
\end{theorem}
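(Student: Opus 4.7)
My plan is to reduce to the finite case via Remark~\ref{remarkModi}(\ref{remarkFirlmly}) and then to induct on the cardinality of the family.

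Since $C$ is norm-compact, I take $\tau$ to be the strong (norm) topology on $E$. Then $C$ is $\tau$-compact, the norm is trivially $\tau$-lower semicontinuous, and every nonexpansive self-map of $C$ is automatically $\tau$-continuous. In particular, the hypotheses of Theorem~\ref{infThm}, together with those of the firmly-nonexpansive refinement described in Remark~\ref{remarkModi}(\ref{remarkFirlmly}), are satisfied for this choice of $\tau$. So once I show that for every finite commuting subfamily $\mathcal{A}\subset\mathcal{S}$ the set $\Fix\mathcal{A}$ is a firmly nonexpansive retract of $C$, the conclusion for the whole family $\mathcal{S}$ follows immediately.

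For the finite case I induct on $|\mathcal{A}|$. The base case---that the fixed point set of a single nonexpansive self-map of a compact convex subset of a Banach space is a firmly nonexpansive retract---is precisely the theorem of Bruck quoted just before the statement as \cite[Theorem~3]{Bruck2}. For the inductive step, suppose a firmly nonexpansive retraction $R_n\colon C\to\Fix\mathcal{S}_n$ has been constructed for $\mathcal{S}_n=\{T_1,\dots,T_n\}$ and let $T_{n+1}$ commute with each $T_i$. The composition $T_{n+1}R_n$ is a nonexpansive self-map of $C$, so the base case produces a firmly nonexpansive retraction $R_{n+1}\colon C\to\Fix(T_{n+1}R_n)$. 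Because $T_{n+1}$ commutes with every element of $\mathcal{S}_n$, Lemma~\ref{commute_retract} identifies $\Fix(T_{n+1}R_n)$ with $\Fix T_{n+1}\cap \Fix\mathcal{S}_n = \Fix\mathcal{S}_{n+1}$. Hence $R_{n+1}$ is a firmly nonexpansive retraction onto $\Fix\mathcal{S}_{n+1}$, which closes the induction.

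The only substantive step is the single-map base case, which is handled by citing Bruck; everything after that is formal bookkeeping built from Lemma~\ref{commute_retract} and from the limiting argument already carried out (and adapted to firm nonexpansivity in the strong topology) in the proof of Theorem~\ref{infThm} via Remark~\ref{remarkModi}(\ref{remarkFirlmly}). Nonemptiness of $\Fix\mathcal{S}$ is automatic, since the produced retraction maps $C$ onto $\Fix\mathcal{S}$.
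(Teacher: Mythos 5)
Your argument is correct, but it handles the finite case by a genuinely different route than the paper. You both perform the same reduction of the infinite family to finite subfamilies via Theorem~\ref{infThm} and Remark~\ref{remarkModi}(\ref{remarkFirlmly}) with $\tau$ the norm topology; the divergence is in how the finite case is obtained. The paper does not re-run an induction at all: it simply observes that the mappings $F_{s}$ already constructed in the proof of Theorem~\ref{proofFiniteRetract} (the fixed points of the contractions $T_{x,s}$) are firmly nonexpansive by \cite[Theorem~11.3]{GoKi}, that firm nonexpansivity survives the passage to the strong pointwise limit $R_{n+1}=\lim_{\alpha}F_{s_{\alpha}}$, and hence that the strong-topology version of Theorem~\ref{proofFiniteRetract} already outputs firmly nonexpansive retractions. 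You instead import the single-map case from Bruck as a black box and run the formal induction of Theorem~\ref{bruck}: since $T_{n+1}R_{n}$ is nonexpansive, the base case applies to it, and Lemma~\ref{commute_retract} identifies $\Fix(T_{n+1}R_{n})$ with $\Fix\mathcal{S}_{n+1}$. This is sound -- note that the paper's caveat about Theorem~\ref{bruck} requiring $\mathcal{A}$ to be a semigroup does not bite you, because your ambient class is the (compositionally closed) class of nonexpansive maps and the firm nonexpansivity of each $R_{n+1}$ comes for free from the output of the base case rather than from closure of firmly nonexpansive maps under composition (which fails). The trade-off: the paper's route is self-contained modulo the classical fact about the resolvent-type maps $F_{s}$, whereas yours is lighter on analysis but rests entirely on the cited base case being exactly as strong as you claim (a \emph{firmly} nonexpansive, not merely nonexpansive, retraction onto $\Fix T$ for a single nonexpansive $T$ on a compact convex set). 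You should verify that \cite[Theorem~3]{Bruck2} really is that single-map statement and not already the commuting-family result the present theorem is being compared to; if it is the latter, your base case should instead be derived the way the paper does it, from Theorem~\ref{proofFiniteRetract} with $n=0$ and the firm nonexpansivity of the $F_{s}$.
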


\begin{proof}
It is known that the mappings $F_{s}$ defined in the proof of Theorem \ref%
{proofFiniteRetract} are firmly nonexpansive(cf. \cite[Theorem 11.3]{GoKi}). It means that also their
strong limit would be firmly nonexpansive. In effect, the `strong
topology-version' of Theorem \ref{proofFiniteRetract} gives us firmly
nonexpansive retracts. Then it suffices to use Remark \ref{remarkModi},
(\ref{remarkFirlmly}) to finish the proof.
\end{proof}


\begin{thebibliography}{HD}

%% Use the widest label as the parameter.
%% Reference items can be numbered or have labels of your choice, as below.

%% In IMPAN journals, only the title is italicized; boldface is not used.
%% Our software will add links to many articles; for this, enclosing volume numbers in { } is helpful
%% Do not give the issue number unless the issues are paginated separately.



%%%%%%% To ease editing, use normal size:

\normalsize
\baselineskip=17pt

%%%%%%%%%%%%%%%

\bibitem[BGM]{BGM} U. Bader, T. Gelander, N. Monod, A fixed point theorem for $L_{1}$ spaces, Invent. Math. 189 (2012), 143--148.

\bibitem[B]{Br2} R. E. Bruck, Jr., A common fixed point theorem for a commuting
family of nonexpansive mappings, Pacific J. Math. 53 (1974), 59--71.

\bibitem[B2]{Bruck2}R. E. Bruck, Nonexpansive projections on subsets of Banach spaces, Pacific J. Math. Volume 47, Number 2 (1973), 341--355.

\bibitem[BW]{BoWi} S. Borzdy\'{n}ski, A.Wi\'{s}nicki, A common fixed point
theorem for a commuting family of weak* continuous nonexpansive mappings,
Studia Math. 225 (2014), 173--181.

\bibitem[BW2]{BoWi2} S. Borzdy\'{n}ski, A.Wi\'{s}nicki, Applications of uniform asymptotic regularity to fixed point theorems,
J. Fixed Point Theory Appl. 18 (2016), 855--866.

\bibitem[GK]{GoKi} K. Goebel, W. A. Kirk, Topics in Metric Fixed Point Theory,
Cambridge University Press, Cambridge, 1990.

\bibitem[GD]{GrDu} A. Granas, J. Dugundji, Fixed Point Theory, Springer-Verlag, New York, 2003.

\bibitem[K]{Kirk} W. A. Kirk, Metric fixed point theory: old problems and new
directions, Fixed Point Theory 11(2010), 45--58.

\bibitem[L]{La2} A. T.-M. Lau, Some fixed point theorems and their applications to W* -algebras, Fixed Point Theory and Applications, S. Swaminathan (ed.), Academic Press, New York, 1976, 121--129.

\bibitem[L2]{La} A.T.-M. Lau, Amenability and fixed point property for semigroup of nonexpansive mappings, M.A. Thera, J.B. Baillon (Eds.), Fixed Point Theory and Applications, Pitman Res. Notes Math. Ser., vol. 252, Longman Sci. Tech., Harlow (1991), pp. 303--313

\bibitem[LZ]{LaZh} A.T.-M. Lau, Y. Zhang, Fixed point properties for semigroups
of nonlinear mappings and amenability. J. Funct. Anal. 263 (2012),
2949--2977.

\bibitem[LN]{BaRo} B. Lemmens, R. Nussbaum, Nonlinear Perron-Frobenius Theory, Cambridge University Press, 2012

\bibitem[SM]{SaIs} S. Saeidi, I. Mohamadi, Existence and structure of the common fixed points based on tvs, Faculty of Sciences and Mathematics, Filomat 31, No 6 (2017), 1773--1779.

\end{thebibliography}
\end{document}